\def\ds{\displaystyle}
\def\={\buildrel \triangle \over =}
\def\a{\alpha}
\def\d{\delta}
\def\l{\lambda}
\def\n{\nabla}
\def\si{\sigma}
\def\f{\varphi}
\def\o{\omega}
\def\ns{\noalign{\ss} }
\def\G{\Gamma}
\def\D{\Delta}
\def\O{\Omega}
\def\ms{\medskip}
\def\q{\quad}
\def\qq{\qquad}
\def\dbR{{\mathbb{R}}}
\def\3n{\negthinspace \negthinspace \negthinspace }
\def\2n{\negthinspace \negthinspace }
\def\1n{\negthinspace }
\def\cA{{\cal A}}
\def\cF{{\cal F}}
\def\pa{\partial}
\def\cd{\cdot}
\def\Re{{\mathop{\rm Re}\,}}
\def\|{||}
\def\({\Big (}
\def\){\Big )}
\def\[{\Big[}
\def\]{\Big]}
\def\be{\begin{equation}}
\def\bel{\begin{equation}\label}
\def\ee{\end{equation}}
\def\bt{\begin{theorem}}
\def\bcd{\begin{condition}}
\def\ecd{\end{condition}}
\def\et{\end{theorem}}
\def\bc{\begin{corollary}}
\def\ec{\end{corollary}}
\def\bde{\begin{definition}}
\def\ede{\end{definition}}
\def\bl{\begin{lemma}}
\def\el{\end{lemma}}
\def\bp{\begin{proposition}}
\def\ep{\end{proposition}}
\def\br{\begin{remark}}
\def\er{\end{remark}}
\def\ba{\begin{array}}
\def\ea{\end{array}}
\def\ed{\end{document}}
\def\ns{\noalign{\ms}}
\def\ds{\displaystyle}
\def\Om{\Omega}
\newtheorem{lemma}{Lemma}[section]
\newtheorem{remark}{Remark}[section]
\newtheorem{theorem}{Theorem}[section]
\newtheorem{corollary}{Corollary}[section]
\newtheorem{definition}{Definition}[section]
\newtheorem{proposition}{Proposition}[section]
\newtheorem{condition}{Condition}[section]
\title{\bf Logarithmic Stability for Coefficients Inverse
Problem of Coupled Schr\"{o}dinger Equations}
\author{Fangfang Dou\thanks{School of Mathematical Sciences, University of Electronic Science and Technology of China, Chengdu, China. Email: fangfdou@uestc.edu.cn.} \ and Masahiro Yamamoto\thanks{Department of Mathematical Sciences, The University of Tokyo, 3-8-1 Komaba, Meguro, Tokyo 153, Japan. Email: myama@ms.u-tokyo.ac.jp.}\ \thanks{Peoples’Friendship University of Russia (RUDN University), 6 Miklukho-Maklaya St, Moscow, 117198, Russian Federation}}
\date{}                            
\begin{document}

\maketitle
\begin{abstract}
In this paper, we study an inverse coefficients
problem for two coupled Schr\"{o}dinger
equations with an observation of one component
of the solution. The observation is done in a
nonempty open subset of the domain where the
equations hold. A logarithmic type stability
result is obtained. The main method is based on
the Carleman estimate for coupled
Schr\"{o}dinger equations and coupled heat
equations, and  the Fourier-Bros-Iagolnitzer transform.
\end{abstract}

\noindent {\bf Keywords:} logarithmic stability, 
coefficients inverse problem, coupled Schr\"{o}dinger equations, 
Carleman estimate

\section{Introduction}

Let $T>0$ and $\O\subset\dbR^3$ be a nonempty
bounded domain with smooth boundary and let 
$i = \sqrt{-1}$. Consider the following coupled
Schr\"{o}dinger equations:
\begin{equation}\label{eq1}
\left\{
\begin{array}{ll}
\ds i\partial_t y_1+\Delta
y_1+a_{11}(x)y_1+a_{12}(x)y_2=0 &\text{ in
} \Omega\times(0,T),\\
\ns\ds i\partial_t y_2+\Delta
y_2+a_{21}(x)y_1+a_{22}(x)y_2=0 &\text{ in
} \Omega\times(0,T),\\
\ns\ds y_1=0, y_2=0 &  \text{ on
}\Gamma\times(0,T),\\
\ns\ds y_1(x,0)=y_{10}, \; y_2(x,0)=y_{20}&
\text{ in }\Omega.
\end{array}
\right.
\end{equation}
System \eqref{eq1} is a useful model for describing 
molecular multiphoton transitions
induced by a laser (e.g.\cite{BA1993,GM1992}),
where  $a_{11}(x)$ and $a_{22}(x)$ are
field-free molecular electronic potentials, and
$a_{12}(x)$ and $a_{21}(x)$ are
radiation-molecule interactions. In physical
models, usually, the radiation-molecule
interactions can be deduced a priori while the
field-free molecular electronic potentials
should be determined a posteriori.

Let $\o$ be a nonempty open subset of $\O$. In
this paper, we study the following inverse
problems:

\vspace{0.1cm}

{\bf Problem (IP)} Can one recover the
field-free molecular electronic potentials
$(a_{11}, a_{22})$ from suitable observation of
$y_1$ on $[0,T]\times \o$?

\vspace{0.1cm}

Here the word ``recover" means two issues:
One is that the
observation determines the potentials uniquely.
The other is to find an algorithm to compute the potentials
efficiently. 

A stability estimate
\begin{equation}\label{7.30-eq1}
\|(a_{11},a_{22})\| \leq C\|y_1|_{\o}\|
\end{equation}
with suitable norms under suitable boundedness conditions is not only 
important theoretically but also essential for the second issue: it can 
guarantee the convergence of the numerical
algorithm for computing $(a_{11},a_{22})$.

Inequalities in the type of \eqref{7.30-eq1} for
Schr\"odinger equations were studied extensively
(e.g.
\cite{BC2009,BM2008,BKS2016,BP2007,CLG2010,C2012,D2015,KPS,MOR2008,Yuan2010}).
Roughly speaking, the existing works fall
into two categories: one is Lipschitz type
stability when the observation domain fulfills
some geometrically condition (e.g.
\cite{BM2008,BP2007,CLG2010,C2012,D2015,KPS,MOR2008,Yuan2010}),
while the other is logarithmic type stability when the
observation domain is a general nonempty open
subset of the domain or its boundary (e.g.
\cite{BC2009,BKS2016}). For the latter case,
some a priori knowledge about the potential on a
suitable subdomain should be known (see \cite{BC2009}).

A main method for establishing the Lipschitz type
stability is based on
Carleman estimate. On the other hand, the key
method for proving the logarithmic type stability
is a combination of the Carleman estimate and
the Fourier-Bros-Iagolnitzer (F.B.I.)
transformation. For  readers who are not
familiar with the F.B.I. transform, we refer them
to \cite{Delort1992} for an introduction and to
\cite{Phung2001} for the application of F.B.I.
transform to establish observability estimate
for Schr\"odinger equations.

To the best of our knowledge, although there are several interesting works concerning inverse problem for a parabolic system with two components by measurements of one component, for \cite{BCGY2009} as an example, there is no work
on the inverse coefficients problem for
the coupled Schr\"{o}dinger equations with an
observation on one component of the solution. 
Due to the essential difference between these two equations, 
we have to argue independently of \cite{BCGY2009} in the case of parabolic 
systems.
In this paper, we will study this problem by 
the Carleman estimate for Schr\"{o}dinger
equation, coupled heat equations and F.B.I.
transform. Although we borrow some idea in
\cite{BC2009} to prove our main result, since we
study the inverse problem for couple
Schr\"odinger equations with a single
observation on one component of the solution, we
cannot simply mimic the method in \cite{BC2009}
to obtain the desired logarithmic type
stability. Some technical obstacles should be
overcome, as is seen in the proof.

The rest of this paper is organized as follows.
Section 2 is devoted to presenting the main
result while section 3 is devoted to the proof of the main result.


\section{Statement of the main result}


Let $\o_0$ be an open subset of $\O$ such that
there exists a function $\phi\in
C^4(\overline{\Omega})$ satisfying
\begin{equation}\label{eq18}
\begin{cases}\ds
\nabla \phi \neq0 \text{ in } \overline{\Omega\backslash\omega_0}, \\
\ns\ds\frac{\partial\phi}{\partial \nu}\leq0 \text{ on }\partial\Omega, \\
\ns\ds|\nabla\phi(x)\cdot\xi|^2+\sum_{i,j=1}^3(\partial_j
\partial_j\phi(x))\xi_j\xi_j >0\quad
\mbox{in }\overline{\Omega\backslash\omega_0},\;\forall\,\xi=(\xi_1,\xi_2, \xi_3)\in\mathbb{R}^3, \\
\ns\ds\phi(x)>\frac23\|\phi\|_{L^\infty(\Omega)},\qquad
\forall x\in\Omega.
\end{cases}
\end{equation}
Here $\nu=\nu(x)$ denotes the outward normal
vector of $\O$.

There are plenty of choices of $\o_0$ satisfying the above
condition. A typical example can be constructed as
follows.

Let $x_0\in \dbR^3\setminus\overline\Om$ and
$$
\G_0\=\{x\in \G| (x-x_0)\cd \nu(x)\geq 0 \}.
$$
Let $\d>0$. Put
$$
\o_0=O_\d\=\{x\in \Om|\, {\rm dist}(x,\G_0)<\d
\}.
$$
Let $\tilde\psi(\cd)\in C^4(\overline \Om)$ be a
nonnegative function such that
$\tilde\psi(x)=|x-x_0|^2$ for $x\in
\overline{\Omega\backslash\omega_0}$ and 
$\tilde\psi(x)> 0$  for $x\in O_{\d/2}$ and $\tilde{\psi}=0$ on $\Gamma_0$.
Then $\psi(x)=\tilde\psi(x) +
2|\tilde\psi|_{L^\infty(\Om)}$ is the desired
function.

\ms

More examples of such kind of $\o_0$ and $\psi$
can be found in \cite{MOR2008}.

\ms

Clearly, if \eqref{eq18} holds, then there
exists $\o_1\subset\subset\o_0$ such that
\begin{equation}\label{eq18.1}
\begin{cases}\ds
\nabla \phi \neq0 \text{ in }
\overline{\Omega\backslash\omega_1}, \\
\ns\ds|\nabla\phi(x)\cdot\xi|^2+\sum_{i,j=1}^3(\partial_j
\partial_j\phi(x))\xi_j\xi_j >0\quad
\mbox{in
}\overline{\Omega\backslash\omega_1},\;\forall\,\xi=(\xi_1,\xi_2,
\xi_3)\in\mathbb{R}^3.
\end{cases}
\end{equation}
Let $\tilde\omega\in\Omega$ be a neighborhood of
$\omega_1$ such that
$\o_1\subset\subset\tilde\o$ and $\pa\tilde\o$
is $C^2$.  Set
\begin{equation}
\mathcal{H}=C^1(0,T;H^2(\Omega))\cap C^2(0,T;H^1(\Omega)),
\end{equation}
where $H^k(\Omega)$ is the usual Sobolev space.
The Banach space $\mathcal{H}$  is equipped with
its natural norm
\begin{equation}\label{71eq3}
\|z\|^2_\mathcal{H}=\|z\|^2_{C^1(0,T;H^2(\Omega))}+\|z\|^2_{C^2(0,T;H^1(\Omega))},
\q \forall z\in\mathcal{H}.
\end{equation}
Let $\omega\subset\omega_1\subset\Omega$ be an
arbitrary nonempty open subset. Suppose that
$\{a_{jk}\}_{j,k=1}^2\subset L^\infty(Q)$ and
we can choose a constant $a_0>0$ such that 
\begin{equation}\label{522eq5}
a_{21}\geq a_0  \text{ or } -a_{21}\geq a_0
\text{ in } \omega\times(0,T).
\end{equation}
\begin{remark}\label{rem1}
\eqref{522eq5} means that the coupling between
$y_1$ and $y_2$ does not degenerate. More
precisely, $y_1$ can effect $y_2$ adequately.
Without \eqref{522eq5}, one cannot obtain information of $y_2$ from $y_1$.
\end{remark}
Let us now define the admissible set of unknown
coefficients. Fix a constant $M>0$ and two
functions $\varpi_1, \varpi_2\in
L^\infty(\tilde\o;\dbR)$. Let
$\mathcal{A}(\tilde\omega,M)$ be the set of
pairs of real-valued functions $(a_{11},a_{22})$
such that
\begin{equation}\label{71eq1}
\begin{array}{ll}\ds
\mathcal{A}(\tilde\omega,M)\3n&\ds\=\{(a_{11},a_{22})\in
L^\infty(\Omega;\dbR)^2|\,\|a_{jj}\|_{L^\infty(\Omega)}\leq
M, a_{jj}(x)=\varpi_j(x) \text{ on }
\tilde\omega,\\
\ns&\ds\q\;\; \mbox{the equation \eqref{eq1} has
a unique solution $(y_1, y_2) \in\mathcal{H}$
satisfying }\\
\ns&\ds\q\;\; \|y_j\|_\mathcal{H}\leq {\bf C}(M)
\mbox{ for some constant ${\bf C}(M)$  depending
on } M, \; j=1,2\}.
\end{array}
\end{equation}
\begin{remark}
There are mainly two restrictions on a element
in $\mathcal{A}(\tilde\omega,M)$. The first one
is that there is a priori bound $M$. This is
reasonable since in a physical model, one
can assume to know some preliminary upper bound on unknown 
potentials.  The second one is that we know the
value of $(a_{11}(x),a_{22}(x))$ for
$x\in\tilde\o$.  This is technically restrictive
but is acceptable because we may be able to directly measure 
potentials near the boundary.  
Furthermore we note that compared with
\cite{BC2009}, we need less information on unknown potentials.
\end{remark}

In what follows, in order to emphasize the dependence of
the solution to \eqref{eq1} on the unknown
potentials, we write
$(y_1(a_{11},a_{22}),y_2(a_{11},a_{22}))$ for
the solution to \eqref{eq1}.

We choose the initial data $(y_{10},y_{20})$
which satisfy all conditions ensuring that
$\mathcal{A}(\tilde\omega,M)$ is nonempty. Also,
for $j=1,2$, they fulfill 
\begin{equation}\label{78eq1}
\begin{cases}
\ds y_{j0}(x)\in\mathbb{R} \text{ or } iy_{j0}(x)\in\mathbb{R} \text{ a.e. in } \Omega,\\
\ns\ds|y_{j0}(x)|\geq r>0 \text{ a.e. in }\Omega,\\
\ns\ds y_{j}(a_{11},a_{22}) \in
H^1(0,T;L^\infty(\Omega)).
\end{cases}
\end{equation}
\begin{remark}
Condition 
\eqref{78eq1} means that we have to choose initial data suiatably, and 
is a technical restriction.
Similarly to Appendix B in \cite{BC2009}, we can verify that
such $(y_{10},y_{20})$ exists.
\end{remark}

The main result of this paper is stated as
follows.

\begin{theorem}\label{main2D}
There exists a constant $C > 0$ such that
\begin{equation}\label{main2D-eq1}
\begin{array}{ll}\ds
|(a_{11}-\tilde a_{11},a_{22}-\tilde
a_{22})|_{L^2(\O)} \\
\ns\ds\leq C\(\big|\ln \|y_1(a_{11},a_{22})-
\tilde{y}_1(\tilde a_{11},\tilde
a_{22})\|_{H^1(0,T;H^1(\omega))}\big|^{-1}
\\
\ns\ds\qq+\|y_1(a_{11},a_{22})-\tilde y_1(\tilde
a_{11},\tilde
a_{22})\|_{H^1(0,T;H^1(\omega))}\),
\end{array}
\end{equation}
for all $(a_{11},a_{22}), (\tilde a_{11},\tilde
a_{22}) \in \cA(\tilde\o,M)$.
\end{theorem}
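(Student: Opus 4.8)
The plan is to reduce the coefficient estimate to an observation estimate for the time derivative of the solution difference at $t=0$, and then to establish that observation estimate by combining an F.B.I. transform with a Carleman estimate for the associated coupled heat system. Concretely, I would set $u_j=y_j(a_{11},a_{22})-\tilde y_j(\tilde a_{11},\tilde a_{22})$ and $b_{jj}=a_{jj}-\tilde a_{jj}$ for $j=1,2$. Since the interaction coefficients $a_{12},a_{21}$ are common to both systems, the pair $(u_1,u_2)$ solves \eqref{eq1} with homogeneous initial condition $u_j(\cdot,0)=0$ (equal initial data) and right-hand sides $-b_{11}\tilde y_1$ and $-b_{22}\tilde y_2$. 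Evaluating the two equations at $t=0$ and using $u_1(\cdot,0)=u_2(\cdot,0)=0$ gives the pointwise identities $\pa_t u_j(x,0)=i\,b_{jj}(x)\,y_{j0}(x)$. Because $|y_{j0}|\ge r>0$ a.e. and $y_{j0}$ (or $iy_{j0}$) is real by \eqref{78eq1}, the real coefficient $b_{jj}$ is recovered with $|b_{jj}(x)|\le r^{-1}|\pa_t u_j(x,0)|$, so it suffices to bound $\|\pa_t u_j(\cdot,0)\|_{L^2(\O)}$. Writing $v_j=\pa_t u_j$, the pair $(v_1,v_2)$ solves the same coupled Schr\"odinger operator with the \emph{time-independent} sources $-b_{jj}\pa_t\tilde y_j$, and the measured quantity $\|u_1\|_{H^1(0,T;H^1(\omega))}$ controls $v_1$ on $\omega\times(0,T)$.

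Next I would apply the F.B.I. transform in the time variable to $(v_1,v_2)$, turning the dispersive system into a coupled \emph{heat} system in $(x,s)$ whose sources are the transforms of $-b_{jj}\pa_t\tilde y_j$, together with boundary-in-time remainder terms produced by truncating the Gaussian kernel to $(0,T)$. To this transformed system I would apply the Carleman estimate for coupled heat equations with the weight built from the function $\phi$ of \eqref{eq18.1}, using only the observation of (the transform of) $v_1$ on $\omega$. Here the non-degeneracy \eqref{522eq5}, $|a_{21}|\ge a_0$ on $\omega$, is what lets the local observation of the first component dominate that of the second one through the coupling, so that both components are controlled by a single measurement; I would also use that $b_{11}=b_{22}=0$ on $\tilde\o\supset\omega$, by the definition of $\cA(\tilde\o,M)$ in \eqref{71eq1}, which kills the source terms on the observation region. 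Reading the resulting inequality at the central value $s=0$, where the transform reconstructs $v_j(\cdot,0)$, yields a bound of the schematic form $\|v_j(\cdot,0)\|_{L^2(\O)}^2\le C\,e^{C\lambda}D^2+R(\lambda)$, with $D=\|u_1\|_{H^1(0,T;H^1(\omega))}$ and $\lambda$ the large F.B.I./Carleman parameter.

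The remainder $R(\lambda)$, coming from the Gaussian tail and the truncation terms, is estimated by the a priori bound through $\|v_j\|_{\mathcal H}\le\mathbf C(M)$ and decays in $\lambda$. Balancing the two terms by choosing $\lambda\sim c\,|\ln D|$ converts the estimate into the logarithmic rate $\|b_{jj}\|_{L^2(\O)}\lesssim|\ln D|^{-1}$ in the regime $D\ll1$; the additive term $+D$ in \eqref{main2D-eq1} absorbs the complementary regime in which $D$ is not small. Combining the bounds for $b_{11}$ and $b_{22}$ and recalling $|b_{jj}|\le r^{-1}|\pa_t u_j(\cdot,0)|$ then gives \eqref{main2D-eq1}.

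The hard part will be the coupled Carleman step under a \emph{single} observation: unlike \cite{BC2009}, where a scalar equation is measured directly, here $v_2$ is never observed, so the entire information on $b_{22}$ must be routed through the coupling \eqref{522eq5}, and the cross terms $a_{12}v_2$ and $a_{21}v_1$ must be absorbed into the Carleman left-hand side while keeping $R(\lambda)$ uniform in $\lambda$ after the transform. Carrying the F.B.I. truncation remainders consistently through the coupled heat Carleman estimate, and doing so for both components simultaneously, is the technical crux.
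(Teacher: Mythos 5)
Your toolbox is the right one (F.B.I.\ transform, cascade parabolic Carleman estimate exploiting \eqref{522eq5}, balancing $\lambda\sim|\ln D|$), but your one-step architecture has a gap at its center: the unknown source term does not go away. After you differentiate in time and F.B.I.-transform $(v_1,v_2)=(\pa_t u_1,\pa_t u_2)$ on all of $\Omega$, the coupled heat system carries the sources $\widehat{b_{jj}\,\pa_t\tilde y_j}$ over the \emph{whole} domain (the vanishing $b_{jj}=0$ on $\tilde\o$ only kills them near $\o$). Any Carleman estimate of Gonz\'alez-Burgos--de Teresa type puts $\int\!\!\int e^{2\sigma\alpha}|\widehat{b_{jj}\pa_t\tilde y_j}|^2$ on the right-hand side, and this term is of exactly the same size as the quantity $\|b_{jj}\|_{L^2(\Omega)}^2$ you are trying to bound. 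Your schematic inequality $\|v_j(\cdot,0)\|^2\le Ce^{C\lambda}D^2+R(\lambda)$ silently drops it; to absorb it you would need a Bukhgeim--Klibanov mechanism for the \emph{transformed parabolic} system, i.e.\ a term $\int_\Omega e^{-2\sigma\alpha(x,0)}|V_j(x,0)|^2$ appearing on the left with a constant gaining a negative power of $\sigma$ over the source. That mechanism is available for the Schr\"odinger system (it is exactly the $\Re(J)=\frac12\int|\hat u(x,0)|^2$ identity, which uses $i\pa_t$ and the vanishing of the weight at $t=T$), but you have not produced it for the heat system obtained after the transform, where $V_j(x,0)$ is only a Gaussian mollification of $v_j(x,\cdot)$ and no such clean identity holds. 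This is precisely why the paper splits the argument: Lemma \ref{lm1} performs the Bukhgeim--Klibanov absorption on the original Schr\"odinger system (with the weight $\hat\a$ built from $\phi$ of \eqref{eq18}--\eqref{eq18.1}), at the price of needing \emph{both} components on the larger set $\o_1$; the F.B.I./parabolic-Carleman machinery is then used only on the small neighbourhood $\tilde\o$, where $a_{jj}=\varpi_j=\tilde a_{jj}$ makes the transformed system source-free up to cutoff commutators $[\Delta,\chi]u_j$ and Gaussian tails from $\Phi'$ --- terms that genuinely decay in the parameters. Relatedly, you propose to run the heat Carleman estimate with the weight built from $\phi$ of \eqref{eq18.1}; that function is calibrated for the Schr\"odinger estimate (pseudoconvexity, sign of $\pa_\nu\phi$), whereas the parabolic step needs the weight $\psi$ of \eqref{53eq1} adapted to $(\tilde\o,\o)$.

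Two further points would need repair even within your scheme. First, reading the transform ``at $s=0$'' with the time interval $(0,T)$ places the reconstruction point at the \emph{edge} of the data window, where half the Gaussian mass falls outside $(0,T)$; the truncation error there is $O(1)$ in $\lambda$, not a decaying remainder. The paper's even-conjugate extension of $(z_1,z_2)$ to $(-T,T)$ --- which is what forces the reality condition in \eqref{78eq1}, not only the lower bound $|y_{j0}|\ge r$ --- is what centres the window and makes the $\Phi'$-remainder live on $K$, away from the origin, where $e^{-\lambda^2|l_0-l|^2/4}$ is exponentially small. Second, the passage from the transformed quantity $W_{j,\lambda}$ back to $w_j$ (the paper's Parseval-plus-Cauchy-integral computation \eqref{eq27}--\eqref{512eq2}) is the step where the $\lambda^{-2}$ error term in \eqref{519eq1} actually comes from; asserting that the transform ``reconstructs $v_j(\cdot,0)$'' skips the quantitative version of this step. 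Your opening reduction $|b_{jj}|\le r^{-1}|\pa_t u_j(\cdot,0)|$ is correct and is, in integrated weighted form, what Lemma \ref{lm1} exploits; the missing piece is a closed estimate for $\pa_t u_j(\cdot,0)$ that does not reintroduce $b_{jj}$ on the right.
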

\begin{remark}
One can consider the problem that all the coefficients $\{a_{jk}\}_{j,k=1}^2$ are unknown. In this case, the following three conditions are needed: (1) the unknown coefficient $a_{21}$ must be nonzero in a nonempty open subset $\o$; (2) the functions $a_{11}$ and $a_{12}$, $a_{21}$ and $a_{22}$ must be linearity independence, respectively; (3) two times of observations with different suitable chosen initial data of $y_1$ are required. 
As pointed in Remark \ref{rem1}, condition (1) can not be removed since we only observe a single component of the solutions. Condition (2) is reasonable since what we can observe is only the linear combination of the coefficients.  
Condition (3) can not be deleted because for each observation we only observe the linear combinations to get the coefficients from these combinations
and we need observe the system twice.
\end{remark}

\begin{remark}
From the proof of Theorem \ref{main2D}, one can
see that it can be generalized to a system
coupled by more than two Schr\"odinger equations
with an observation on some components of the
solution. In this paper, to present the key idea
in a simple way, we do not pursue the full
technical generality.
\end{remark}

\section{Proof of Theorem \ref{main2D}}

Before giving the proof, we present a
preliminary result.

\begin{lemma}\label{lm1}
For all $(a_{11},a_{22}), (\tilde a_{11},\tilde
a_{22}) \in \cA(\tilde\o,M)$,
\begin{equation}\label{8.7-eq1}
\sum_{j=1}^2\|a_{jj}-\tilde{a}_{jj}\|^2_{L^2(\Omega)}\leq
C\sum_{j=1}^2\|y_j(a_{11},a_{22})-\tilde{y}_j(\tilde{a}_{11},\tilde{a}_{22})\|^2_{H^1(0,T;H^1(\omega_1))}.
\end{equation}
\end{lemma}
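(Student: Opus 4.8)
The plan is to establish \eqref{8.7-eq1} by the Bukhgeim--Klibanov scheme combined with a Carleman estimate for the coupled Schr\"odinger system: linearize the two problems, differentiate once in time, recover the coefficient differences from the data at $t=0$, and then bound those data by the interior observation on $\omega_1$. Concretely, I would set $u_j=y_j(a_{11},a_{22})-\tilde y_j(\tilde a_{11},\tilde a_{22})$ and $b_{jj}=a_{jj}-\tilde a_{jj}$ for $j=1,2$. Since $a_{12}$ and $a_{21}$ are common to both systems and the two solutions share the initial data $(y_{10},y_{20})$, subtracting the two copies of \eqref{eq1} gives a coupled Schr\"odinger system for $(u_1,u_2)$ with homogeneous Dirichlet condition, vanishing initial data $u_j(\cdot,0)=0$, and diagonal source terms $-b_{11}\tilde y_1$ and $-b_{22}\tilde y_2$. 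Because $a_{11},a_{22},a_{12},a_{21}$ and $b_{11},b_{22}$ are all independent of $t$, setting $v_j=\partial_t u_j$ yields a system of the same form with sources $-b_{11}\partial_t\tilde y_1$ and $-b_{22}\partial_t\tilde y_2$; here the regularity $\tilde y_j\in H^1(0,T;L^\infty(\Omega))$ from \eqref{78eq1} is precisely what makes these differentiated sources admissible.

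Evaluating the undifferentiated difference equations at $t=0$ and using $u_j(\cdot,0)=0$ gives the pointwise identity $v_j(\cdot,0)=i\,b_{jj}\,y_{j0}$. Combined with $|y_{j0}|\ge r>0$ from \eqref{78eq1}, this yields $\|b_{jj}\|_{L^2(\Omega)}^2\le r^{-2}\|v_j(\cdot,0)\|_{L^2(\Omega)}^2$, so the lemma reduces to estimating $\|v_j(\cdot,0)\|_{L^2(\Omega)}$ by the observation. To this end I would apply the Carleman estimate for the coupled Schr\"odinger equations, with a weight $e^{2s\varphi}$ built from the function $\phi$ of \eqref{eq18.1}, to $(v_1,v_2)$: the sign condition $\partial_\nu\phi\le0$ on $\partial\Omega$ forces the boundary terms to carry a favorable sign, so that only interior observation on $\omega_1$ remains, while the off-diagonal couplings $a_{12}v_2$ and $a_{21}v_1$, being lower order, are absorbed by the left-hand side once $s$ is large. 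The surviving observation term is controlled by $\sum_j\|\partial_t u_j\|_{L^2(0,T;H^1(\omega_1))}^2\le\sum_j\|u_j\|_{H^1(0,T;H^1(\omega_1))}^2$, matching exactly the right-hand side of \eqref{8.7-eq1}, and it is here that both components $u_1,u_2$ genuinely enter.

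The step I expect to be the main obstacle is transferring the space-time Carleman inequality back to the slice $t=0$ while disposing of the remaining source. I would choose the time profile of $\varphi$ with a strict maximum at $t=0$, so that $\varphi(\cdot,t)\le\varphi(\cdot,0)$ (possibly after extending the solution to $(-T,T)$ so that a Schr\"odinger-type Carleman estimate with this profile is available), and then use a weighted energy identity for $\int_\Omega|v_j(\cdot,t)|^2e^{2s\varphi}\,dx$, estimating its time derivative through the $v_j$-equation, to bound a multiple of $s\int_\Omega|v_j(\cdot,0)|^2e^{2s\varphi(\cdot,0)}\,dx$ by the Carleman left-hand side plus a negligible terminal contribution at $t=T$. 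The delicate point is that the source integral $\int_0^T\int_\Omega|b_{jj}\partial_t\tilde y_j|^2e^{2s\varphi}\,dx\,dt$ reintroduces $b_{jj}$; using $\varphi(\cdot,t)\le\varphi(\cdot,0)$ and $\tilde y_j\in H^1(0,T;L^\infty(\Omega))$, it is bounded by a constant independent of $s$ times $\int_\Omega|b_{jj}|^2e^{2s\varphi(\cdot,0)}\,dx$, which by the pointwise identity is a multiple of $\int_\Omega|v_j(\cdot,0)|^2e^{2s\varphi(\cdot,0)}\,dx$. The crux is therefore a balancing of powers of $s$: the energy identity carries the factor $s$ on the left, whereas the source carries only an $O(1)$ constant, so for $s$ large the source is absorbed. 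Fixing such an $s$ and discarding the weights, which are then bounded above and below, converts the weighted inequality into \eqref{8.7-eq1}.
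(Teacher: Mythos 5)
Your proposal follows essentially the same route as the paper's proof: linearize, differentiate in time so that $v_j(\cdot,0)=i\,b_{jj}\,y_{j0}$ recovers the coefficient differences via $|y_{j0}|\ge r$, extend to $(-T,T)$, apply the Carleman estimate for the coupled Schr\"odinger system built on $\phi$ from \eqref{eq18}, and absorb the reintroduced source $b_{jj}\partial_t\tilde y_j$ by taking the large parameter large. Your ``weighted energy identity'' for $\int_\Omega|v_j|^2e^{2s\varphi}dx$ is exactly the paper's computation of $\Re(J)$ with the skew part $M_{j2}$ of the conjugated operator (the paper gains the absorption from a $\tau^{-3/2}$ factor in the Cauchy--Schwarz step rather than a factor $s$ on the left, but this is only a bookkeeping difference), so the argument is correct and matches the paper.
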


In order to obtain the Lipschitz stabilty in \eqref{8.7-eq1}, the subdomain $\omega_1$ can not be arbitrarily small and must satisfy \eqref{eq18.1}. Lemma \ref{lm1}  should be a known result.
However, since we failed to find an exact
reference, we provide it here for the sake of
completeness and readers' convenience.

\begin{proof}[Proof of Lemma \ref{lm1}] Let
$\phi\in C^4(\overline{\Omega})$ be the function
satisfying \eqref{eq18} and \eqref{eq18.1}. Set
\begin{equation}
\hat\f(x,t)\triangleq\frac{e^{\eta\phi(x)}}{(T+t)(T-t)},\q
\hat\a(x,t)\triangleq\frac{e^{2\eta
\|\phi\|_{L^\infty(\Omega)}}-e^{\eta\phi(x)}}{(T+t)(T-t)},\q
 \forall(x,t)\in\Omega\times(0,T),
\end{equation}
where $\eta$ denotes some positive number which
can be specified later.

For $j=1,2$, let
$$
z_j=y_j(a_{11},a_{22})-\tilde{y}_j(\tilde{a}_{11},\tilde{a}_{22}),
\q f_j(x)=a_{jj}(x)-\tilde{a}_{jj}(x), \q
R_j(x,t)=\tilde{y}_j(x,t).
$$
Then $(z_1,z_2) \in
[C([0,T];H^1_0(\Omega))]^2 $ is the solution of
the following system:
\begin{equation}\label{71eq2}
\left\{
\begin{array}{ll}\ds
i\partial_t z_1 +\Delta  z_1 +a_{11}z_1 + a_{12}z_2 =f_1(x)R_1(x,t) & \text{ in }\Omega\times(0,T),\\
\ns\ds i\partial_t z_2 +\Delta  z_2 +a_{21}z_1 + a_{22}z_2 =f_2(x)R_2(x,t) & \text{ in }\Omega\times(0,T),\\
\ns\ds z_1(x,0)=z_2(x,0)=0 &  \text{ in }\Omega,\\
\ns\ds z_1=z_2=0 & \text{ on }\Gamma\times(0,T).
\end{array}
\right.
\end{equation}

Take the even-conjugate extensions of
$(z_1,z_2)$ to the interval
$(-T,T)$, i.e., set
$$
(z_1(x,t),z_2(x,t))=(\overline{z_1(x,-t)},\overline{z_2(x,-t)}) \q\mbox{ for }  t\in(-T,0).
$$
If $(R_1(x,0),R_2(x,0))\in \dbR^2$ for a.e.
$x\in\Omega$, then we set
$$
(R_1(x,t),R_2(x,t))=(\overline{R_1(x,-t)},\overline{R_2(x,-t)})\q\mbox{
for } t\in(-T,0).
$$
If $(iR_1(x,0),iR_2(x,0))\in \dbR^2$ for a.e.
$x\in\Omega$, then we set
$$
(R_1(x,t),R_2(x,t))=(-\overline{R_1(x,-t)},-\overline{R_2(x,-t)})\q\mbox{
for } t\in(-T,0).
$$
In such context, we have that $(R_1,R_2)\in
H^1(-T,T;L^\infty(\Omega))^2$, and
$(z_1,z_2)$ solves the system
\eqref{71eq2} in $\Omega\times(-T,T)$.

Assume $(u_1,u_2) =(\partial_t z_1,
\partial_t z_2)$. We have
\begin{equation}\label{eq2}
\left\{
\begin{array}{ll}\ds
i\partial_t u_1+\Delta u_1+a_{11}u_1 + a_{12}u_2=f_1(x)\pa_tR_1(x,t) & \text{ in }\Omega\times(0,T),\\
\ns\ds i\partial_t u_2+\Delta u_2+a_{21}u_1 + a_{22}u_2=f_2(x)\pa_tR_2(x,t) & \text{ in }\Omega\times(0,T),\\
\ns\ds u_1(x,0)=-if_1(x) R_1(x,0),\q u_2(x,0)=-if_2(x) R_2(x,0) &  \text{ in }\Omega,\\
\ns\ds u_1=u_2=0 &  \text{ on }\Gamma\times(0,T).
\end{array}\right.
\end{equation}

It follows from \eqref{71eq3}, \eqref{71eq2} and
\eqref{eq2} that $(u_1,u_2)\in
[C^1([-T,T];H_0^1(\Omega))\cap C([-T,T];$
$H^2(\Omega))]^2$. Further, there exists a
constant ${\bf C}={\bf C}(M,T)>0$ such that
\begin{equation}\label{52eq1}
\|(u_1,u_2)\|^2_{[L^2(-T,T;H^2(\Omega))]^2} +\|
(\partial_tu_1,\partial_tu_2)\|^2_{[L^2(-T,T;H_0^1(\Omega))]^2}\leq
{\bf C}.
\end{equation}

For $j=1,2$ and $\tau>0$, let $\hat u_j =
e^{-\tau\hat\a} u_j$ and
\begin{equation}\label{8.7-eq3}
\begin{cases}\ds
M_{j1} \= i(2\tau\n\hat\a\cd\n\hat
u_j+\tau\D\hat\a \hat
u_j) + \tau\pa_t\hat\a \hat u_j,\\
\ns\ds M_{j2} \= \pa_t\hat u_j + i(\D\hat u_j +
\tau^2|\n\hat\a|^2\hat u_j).
\end{cases}
\end{equation}
By Proposition 3.1 in \cite{MOR2008}, we know
that there exist $\tau_0>0$ and
$\eta_0(\tau_0)>0$ such that for all $\tau\geq
\tau_0$ and $\eta\geq \eta_0(s_0)$, it holds
that
\begin{equation}\label{8.7-eq2}
\begin{array}{ll}
\ds \int_{-T}^T\int_\O e^{-2\tau\hat\a}
\tau^3\eta^4\hat\f^3
(|u_1|^2+|u_2|^2)dxdt+ \int_{-T}^T\int_\O \sum_{j=1}^2|M_{j2}|^2 dxdt\\
\ns\ds \leq C\Big\{\int_{-T}^T\int_{\o_1}
e^{-2\tau\hat\a} \big[\tau^3\eta^4\hat\f^3
\big(|u_1|^2+|u_2|^2\big)+ \tau\eta^2\hat\f
\big(|\nabla
u_1|^2+|\nabla u_2|^2\big)\big]dxdt\\
\ns\ds \qq\; + \int_{-T}^T\int_\O
e^{-2\tau\hat\a}
\big(|f_1(x)\pa_tR_1(x,t)|^2+|f_2(x)\pa_tR_2(x,t)|^2\big)dxdt\Big\}.
\end{array}
\end{equation}
Put
\begin{equation}\label{8.7-eq2.1}
J = - \int_0^T\int_\O e^{-\tau\hat\a}M_{12} \bar
u_1 dx dt- \int_0^T\int_\O e^{-\tau\hat\a}M_{22}
\bar u_2 dx dt.
\end{equation}
Then
$$
\begin{array}{ll}\ds
\Re(J)&\3n \ds= -\Re\[\int_0^T\int_\Om
 \pa_t\hat u_1 \bar{\hat u}_1 dx dt + i \int_0^T\int_\Om
\big(-|\n\hat u_1|^2 +
\tau^2|\n\hat\a|^2|\hat u_1|^2\big) dx dt\]\\
\ns&\ds \q -\Re\[\int_0^T\int_\Om  \pa_t\hat u_2
\bar{\hat u}_2 dx dt + i \int_0^T\int_\Om
\big(-|\n\hat u_2|^2 +
\tau^2|\n\hat\a|^2|\hat u_2|^2\big) dx dt\]\\
\ns&\ds\3n = \frac{1}{2}\int_\Om \big(|\hat
u_1(x,0)|^2  +|\hat u_2(x,0)|^2\big) dx\\
\ns&\ds\3n = \frac{1}{2}\int_\Om
e^{-2\tau\hat\a(x,0)}\big(|f_1(x)|^2|R_1(x,0)|^2
+|f_2(x)|^2|R_2(x,0)|^2 \big)dxdt.
\end{array}
$$
This, together with the conditions on $R_1(x,0)$
and $R_2(x,0)$, implies that
\begin{equation}\label{8.7-eq5}
\Re(J)\geq \frac{r^2}{2} \int_\Om
e^{-2\tau\hat\a(x,0)}\big(|f_1(x)|^2 +|f_2(x)|^2
\big)dxdt.
\end{equation}
On the other hand, it follows from
\eqref{8.7-eq2.1} that
\begin{equation} \label{8.23-eq1}
\begin{array}{ll}\ds
|J|&\3n\ds \leq \(\int_0^T\int_\O
e^{-2\tau\hat\a} | u_1|^2 dx
dt\)^{\frac{1}{2}}\( \int_0^T\int_\O
|M_{12}|^2 dx dt\)^{\frac{1}{2}}\\
\ns&\ds\q + \(\int_0^T\int_\O e^{-2\tau\hat\a} |
u_2|^2 dx dt\)^{\frac{1}{2}}\( \int_0^T\int_\O
|M_{22}|^2 dx dt\)^{\frac{1}{2}}\\
\ns&\ds\3n\leq
\tau^{\frac{3}{2}}\eta^2\int_0^T\!\int_\O\!
e^{-2\tau\hat\a} \big(|u_1|^2\! +\! |u_2|^2\big)
dx dt \!+\!
\tau^{-\frac{3}{2}}\eta^{-2}\!\int_0^T\!\int_\O\!
\big(|M_{12}|^2\! +\! |M_{22}|^2\big) dx dt.
\end{array}
\end{equation}
From the choice of $\hat\a$, we find that
$$
\begin{array}{ll}\ds
\q\int_{-T}^T\int_\O e^{-2\tau\hat\a}
\big(|f_1(x)\pa_tR_1(x,t)|^2+|f_2(x)\pa_tR_2(x,t)|^2\big)dxdt\\
\ns\ds \leq C\int_\O e^{-2\tau\hat\a(x,0)}
\big(|f_1(x)|^2+|f_2(x)|^2\big)dx.
\end{array}
$$
This, together with \eqref{8.7-eq2},
\eqref{8.7-eq5} and \eqref{8.23-eq1}, implies
that
\begin{equation}\label{8.7-eq6}
\begin{array}{ll}\ds
\q\frac{r^2}{2}\int_\Om
e^{-2\tau\hat\a(x,0)}\big(|f_1(x)|^2 +|f_2(x)|^2
\big)dxdt \\
\ns\ds\leq
C\tau^{-\frac{3}{2}}\eta^{-2}\Big\{\int_{-T}^T\int_{\o_1}
e^{-2\tau\hat\a} \big[
\tau^3\eta^4\hat\f^3\big(|u_1|^2+|u_2|^2\big)+
\tau\eta^2\hat\f \big(|\nabla
u_1|^2+|\nabla u_2|^2\big)\big]dxdt\\
\ns\ds \qq\qq\qq + \int_\O e^{-2\tau\hat\a(x,0)}
\big(|f_1(x)|^2+|f_2(x)|^2\big)dx\Big\}.
\end{array}
\end{equation}
Thus, there is an $\tau_1>0$ such that for all
$\tau\geq \max\{\tau_0,\tau_1\}$ and $\eta\geq
\eta_0(\tau_0)$,
\begin{equation}\label{8.7-eq4}
\begin{array}{ll}
\ds \q\int_\Om
e^{-2\tau\hat\a(x,0)}\big(|f_1(x)|^2 +|f_2(x)|^2
\big)dxdt\\
\ns\ds \leq C\tau^{-\frac{3}{2}}\eta^{-2}
\int_{-T}^T\int_{\o_1} e^{-2\tau\hat\a} \big[
\tau^3\eta^4\hat\f^3 \big(|u_1|^2+|u_2|^2\big)+
\tau\eta^2\hat\f \big(|\nabla u_1|^2+|\nabla
u_2|^2\big) \big]dxdt.
\end{array}
\end{equation}
This concludes \eqref{8.7-eq1} and completes the
proof of Lemma \ref{lm1}.
\end{proof}

Next, in order to keep the self-containment, we
give a brief introduction to F.B.I.
transformation here. Let
$$
F(z)=\frac1{2\pi}\int_\mathbb{R}
e^{iz\varrho}e^{-\varrho^2}d\varrho.
$$
Then
\begin{equation*}
F(z)=\frac{\sqrt{\pi}}{2\pi}e^{\frac14
(|\text{Im} z|^2-|\text{Re} z|^2)}e^{-\frac i2
(\text{Im} z \text{Re} z)}.
\end{equation*}
For every $\lambda\geq1$, define
\begin{equation*}
F_\lambda(z)\triangleq\lambda F(\lambda
z)=\frac{1}{2\pi}\int_\mathbb{R}
e^{iz\varrho}e^{-(\frac{\varrho}{\lambda})^2}
d\tau.
\end{equation*}
Then,
\begin{equation*}
|F_\lambda(z)|=\frac{\sqrt{\pi}}{2\pi}\lambda
e^{\frac{\lambda^2}4 (|\text{Im} z|^2-|\text{Re}
z|^2)}.
\end{equation*}
Let $s, l_0\in\mathbb{R}$, the F.B.I.
transformation  $\mathcal{F}_\lambda$ for
$u\in\mathcal{S}(\mathbb{R}^{n+1})$ is defined
as follows:
\begin{equation}\label{eq19}
(\cF_\l u)(x,s)=\int_{\mathbb{R}}
F_\lambda(l_0+is-l)\Phi(l)u(x,l)dl.
\end{equation}

Now we are in a position to prove Theorem
\ref{main2D}.
\begin{proof}[Proof of Theorem \ref{main2D}] The
proof is long.  We divide it into four steps.

\ms

{\bf Step 1}. In this step, we introduce an
equation on $(-T,T)\times \tilde\o$.

\ms

Recall that $\omega$ is an arbitrary fixed
nonempty subset of $\tilde\omega$ such that
$\overline{\omega}\subset\tilde\omega$. By
\cite[Lemma 1.1]{Fursikov},  there exists a
function $\psi\in C^2(\overline{\tilde\omega})$
such that
\begin{equation}\label{53eq1}
\begin{cases}\ds
\psi(x)>0, \q  \forall x\in\tilde\omega, \\
\ns\ds \psi(x)=0,\q \forall x\in
\partial\tilde\omega, \\
\ns\ds |\nabla\psi(x)|>0, \q  \forall
x\in\overline{\tilde\omega\backslash\omega}.
\end{cases}
\end{equation}
We can conclude from \eqref{53eq1} that there
exist a constant $\beta>0$ and
$\o_2\subset\subset\tilde\o$ such that
\begin{equation}\label{520eq1}
\psi(x)\leq\beta, \q \forall x\in
\tilde\omega\backslash\omega_2
\end{equation}
and that
\begin{equation}\label{520eq3}
\psi(x)\geq2\beta, \q \forall x\in \omega_1.
\end{equation}
It follows from the last condition in
\eqref{53eq1} that the maximum value of $\psi$
can only be attained in $\omega$, i.e., there
exists a point $x_0\in\omega$ such that
\begin{equation}\label{520eq2}
\psi(x_0)=\max_{x\in\tilde\omega}\psi(x).
\end{equation}
Let $\chi\in C^\infty_0(\tilde\o)$ be a cut-off
function, which satisfies $0\leq\chi\leq1$ and
\begin{equation}\label{eq20}
\chi(x)=\left\{
\begin{array}{ll}
1, &\text{ if } x\in \omega_2,\\
0, &\text{ if } x\in \tilde\o\setminus\omega_3,
\end{array}\right.
\end{equation}
where  $\omega_3$ is a subset of $\tilde\o$ such
that $\omega_2\subset\subset\o_3$.

Let $(w_1,w_2)=(\chi u_1,\chi u_2)$. Then by
\eqref{71eq1} and \eqref{eq2}, we have that
\begin{equation}\label{eq21}
\left\{
\begin{array}{ll}\ds
i\partial_t w_1+\Delta w_1 +a_{11}w_1 + a_{12}w_2=[\Delta, \chi]u_1 &\mbox{ in } \tilde\omega\times(0,T),\\
\ns\ds i\partial_t w_2+\Delta w_2 +a_{21}w_1 + a_{22}w_2=[\Delta, \chi]u_2 &\mbox{ in } \tilde\omega\times(0,T),\\
\ns\ds w_1(0)=w_2(0)=0 & \text{ in }\tilde\omega,\\
\ns\ds w_1=w_2 =0 &  \text{ on
}\partial\tilde\omega\times(0,T).
\end{array}\right.
\end{equation}
By \eqref{52eq1}, there exists ${\bf C}={\bf
C}(M,T)>0$ such that
\begin{equation}\label{57eq10}
\|(w_1,w_2)\|^2_{L^2(-T,T;H^2(\widetilde{\omega}))^2}
+\|(\pa_tw_1,\pa_tw_2)\|^2_{L^2(-T,T;H_0^1(\widetilde{\omega}))^2}\leq
{\bf C}.
\end{equation}

\ms

{\bf Step 2}. In this step, we introduce a
system of parabolic equations related to
\eqref{eq21} and a Carleman estimate to the
parabolic system.

\ms

For $j=1,2$, let $\ds W_j(x,s)=\int_{\mathbb{R}}
F_\lambda(l_0+is-l)\Phi(l)w_j(x,l)dl$. Since
$$
\begin{array}{ll}
\partial_s W_j(x,s)\3n&\ds= \int_{\mathbb{R}} -i\partial_l F_\lambda(l_0+is-l)\Phi(l)w_j(x,l)dl \\
\ns&\ds= i\int_{\mathbb{R}}
F_\lambda(l_0+is-l)\left(\Phi'(l)w(x,l)+\Phi(l)w_t(x,l)\right)dl,
\end{array}
$$
we follow that
\begin{equation}\label{eq9}\left\{
\begin{array}{ll}\ds
\partial_s W_1 +\Delta W_1 +a_{11}W_1+a_{12}W_2 =F_1 +G_1  &\mbox{ in }\tilde\omega\times\mathbb{R},\\
\ns\ds \partial_s W_2 +\Delta W_2 +a_{21}W_1+a_{22}W_2 =F_2 +G_2 &\mbox{ in }\tilde\omega\times\mathbb{R},\\
\ns W_1=W_2=0 &\mbox{ on
}\partial\tilde{\omega}\times\mathbb{R},
\end{array}\right.
\end{equation}
where for $j=1,2$,
$$
\begin{array}{ll}\ds
F_j(x,s) =i\int_{\mathbb{R}} F_\lambda(l_0+is-l)\Phi'(l)w_j(x,l)dl,\\
\ns\ds G_j(x,s) =\int_{\mathbb{R}}
F_\lambda(l_0+is-l)\Phi(l)[\Delta, \chi]u_jdl.
\end{array}
$$

Let
\begin{equation}
\varphi(x,t)=\frac{e^{\eta\psi(x)}}{(T+t)(T-t)},\q
\alpha(x,t)=\frac{e^{\eta\psi(x)}-e^{2\eta\|\psi\|_{C(\overline{\tilde\omega})}}}{(T+t)(T-t)},
\q \forall\, (x,t)\in \tilde\o\times(-T,T),
\end{equation}
where $\eta>0$.

Let $\Phi\in C^\infty_0(\mathbb{R})$ satisfying
the following conditions:
$$
\begin{cases}\ds
\Phi\in C^\infty_0\(\[-\frac L2,\frac
L2\];[0,1]\),\\
\ns\ds \Phi=1 \mbox{ on }\[-\frac L4,
\frac{L}4\],\\
\ns\ds |\Phi'|\leq\frac2L,
\end{cases}
$$
where $L>0$ will be chosen later.

Take
$$
K=\[-\frac L2,-\frac
L4\]\bigcup\[\frac{L}4,\frac L2\],\qq
K_0=\[-\frac{L}8,\frac{L}8\].
$$
Then $l_0 \in K_0$ in \eqref{eq19}.

According to Theorem 1.1 in \cite{Gonz2010},
there exist a positive function $\alpha_0\in
C^2(\overline{\tilde{\o}})$ (only depending on
$\tilde{\o}$ and $\omega$), two positive
constants $C_0$ (only depending on $\tilde{\o}$,
$\o$, $\a_0$ and $M_0$) and
$\sigma_0=\sigma_0(\tilde{\o}, \omega, M_0)$
such that the solution $(W_1,W_2)\in
[C([-T,T];L^2(\tilde{\omega}))\cap L^2([-T,T];$
$H^1(\tilde{\omega}))]^2$ of \eqref{eq9}
satisfies that
\begin{equation}\label{78eq2}
\begin{array}{ll}
\ds \int_{-T}^T\int_{\tilde{\omega}}
\big(\sigma^{4}\gamma(s)^{4}|\nabla W_1|^2
+\sigma\gamma(s)|\nabla
W_2|^2+\sigma^{6}\gamma(s)^{6}|W_1|^2
+\sigma^{3}\gamma(s)^{3}|W_2|^2\big)e^{2\sigma\alpha}dxds\\
\ns\ds
\leq C_0 \[\int_{-T}^T\int_{\tilde{\omega}}\big(\sigma^3\gamma(s)^3|F_1(x,s)+G_1(x,s)|^2+|F_2(x,s)+G_2(x,s)|^2\big)e^{2\sigma\alpha}dxds\\
\ns\ds\q\q\q +\sigma^7\int_{-T}^T\int_\omega
e^{2\sigma\alpha}\gamma(s)^7|W_1|^2dxds\],
\end{array}
\end{equation}
where $\gamma(s)= \frac1{(T+s)(T-s)}$ and
$\sigma\geq\sigma_0$.

\ms

{\bf Step 3}. In this step, we estimate all the
terms in the right hand side of \eqref{78eq2}.

\ms

Let
\begin{equation}
\mu_2=\frac{e^{2\eta\psi(x_0)}-e^{\eta\psi(x_0)}}{T^2}.
\end{equation}
There exists  $\delta_2>0$ such that
$$
\max_{x\in\omega,t\in[{-T},T]}
\gamma(s)^7e^{2\sigma\alpha}  \leq
e^{-(2-\delta_2)\sigma\mu_2}.
$$
By the property of F.B.I. transformation, we have
that
\begin{equation}\label{eq23}
\begin{array}{ll}
\ds\q\int_{-T}^T\int_{\omega}\gamma(s)^7|W_1|^2 e^{2\sigma\alpha} dxds\\
\ns\ds\leq \max_{x\in\omega,s\in[{-T},T]} \big(\gamma(s)^7e^{2\sigma\alpha}\big) \int_{-T}^T\int_{\omega}|W_1(x,s)|^2 dxds\\
\ns\ds\leq  e^{-(2-\delta_2)\sigma\mu_2} \int_{-T}^T\int_{\omega}\Big|\int_{\mathbb{R}} F_\lambda(l_0+is-l)\Phi(l)w_1(x,l)dl \Big|^2 dxds\\
\ns\ds\leq  e^{-(2-\delta_2)\sigma\mu_2} \int_{-T}^T\int_{\omega}\Big|\int_{\mathbb{R}} \frac{\sqrt\pi}{2\pi} \lambda e^{\frac{\lambda^2}4(s^2-|l_0-l|^2)}\Phi(l)w_1(x,l)dl \Big|^2  dxds\\
\ns\ds\leq \frac{\lambda^2}{4\pi}e^{-(2-\delta_2)\sigma\mu_2}  \int_{-T}^T e^{\frac{\lambda^2}2 s^2}ds \big|\sup \Phi\big|^2 \int_{\omega}\Big|\int_{-\frac L2}^{\frac L2} w_1(x,l)dl \Big|^2  dx\\
\ns\ds\leq
\frac{\lambda^2LT}{2\pi}e^{-(2-\delta_2))\sigma\mu_2}
e^{\frac{\lambda^2}2 T^2}
\int_{\omega}\int_{-\frac L2}^{\frac L2}
|w_1(x,l)|^2dldx.
\end{array}
\end{equation}
From the definition of $F_j$, we see that
\begin{eqnarray}\label{eq17}
&&\int_{-T}^T\int_{\tilde\omega} |F_j(x,s)|^2 dxds\nonumber\\
&=&\3n\int_{-T}^T\int_{\tilde\omega} \Big|i\int_{\mathbb{R}} F_\lambda(l_0+is-l)\Phi'(l)w_j(x,l)dl\Big|^2 dxds\nonumber\\
&\leq&\int_{-T}^T\int_{\tilde\omega} \Big|\int_{K} \frac{\sqrt{\pi}}{2\pi}\lambda e^{\frac{\lambda^2}4 (s^2-|l_0-l|^2)}\Phi'(l)w_j(x,l)dl\Big|^2 dxds\nonumber\\
&\leq& \frac1{2\pi}\lambda^2 e^{\frac{\lambda^2}2 T^2}T\max_{K} |\Phi'(l)|^2 \int_{\tilde\omega} \Big| \int_{K} e^{-\frac{\lambda^2}4 |l_0-l|^2} w_j(x,l)dl\Big|^2 dx\\
&\leq& \frac1{2\pi}\lambda^2 e^{\frac{\lambda^2}2 T^2}T e^{-\frac{\lambda^2}2 (\frac L8)^2}  \max_{K} |\Phi'(l)|^2  \frac L2\int_{\tilde\omega} \int_{K} |w_j(x,l)|^2 dldx\nonumber\\
&\leq& \frac1{2\pi}\lambda^2 e^{\frac{\lambda^2}2 T^2}T e^{-\frac{\lambda^2}2 (\frac L8)^2}  \left(\frac2L\right)^2  \frac L2\int_{\tilde\omega} \int_{K} |w_j(x,l)|^2 dldx\nonumber\\
&\leq& \frac{\lambda^2T}{\pi L}
e^{\frac{\lambda^2}2 (T^2- (\frac L8)^2)}
\int_{\tilde\omega} \int_{K} |w_j(x,l)|^2
dldx.\nonumber
\end{eqnarray}
Since $\text{supp} \chi'\subset
\tilde\omega\backslash\omega_1$ and $G_j(x,s)=0$
in $\omega_1$, it holds that
\begin{eqnarray}\label{eq16}
&&\int_{-T}^T\int_{\tilde\omega} |G_j(x,s)|^2
dxds\nonumber\\
&=&\int_{-T}^T\int_{\tilde\omega} \Big|\int_{\mathbb{R}} F_\lambda(l_0+is-l)\Phi(l)[\Delta, \chi]u_jdl\Big|^2 dxds\nonumber\\
&\leq&\int_{-T}^T\int_{\tilde\omega} \Big| \int_{-\frac L2}^{\frac L2}\frac{\sqrt{\pi}}{2\pi}\lambda e^{\frac{\lambda^2}4 (s^2-|l_0-l||^2)} [\Delta, \chi]u_jdl\Big|^2 dxds\\
&\leq& \frac1{2\pi}\lambda^2 e^{\frac{\lambda^2}2 T^2}T \int_{\tilde\omega\backslash\omega_1} \Big| \int_{-\frac L2}^{\frac L2}  e^{-\frac{\lambda^2}4 |l_0-l|^2} [\Delta, \chi]u_jdl\Big|^2 dx\nonumber\\
&\leq& \frac{\lambda^2T L}{2\pi}
e^{\frac{\lambda^2}2 T^2}\max\{|\nabla\chi|^2,
|\Delta\chi|^2\} \int_{-\frac L2}^{\frac L2}
\int_{\tilde\omega\backslash\omega_1} \left(
|u_j(x,l)|^2 + |\nabla
u_j(x,l)|^2\right)dxdl.\nonumber
\end{eqnarray}
Set
\begin{equation}
\mu_1=\frac{e^{2\eta\|\psi\|_\infty}-e^{\eta
\beta}}{T^2}.
\end{equation}
By \eqref{520eq1}, we know that there exists
$\delta_1>0$ such that
$$
\max_{x\in\tilde \omega,s\in[{-T},T]}
\gamma(s)^3e^{2\sigma\alpha} \leq
e^{-(2-\delta_1)\sigma\mu_2},\qq
\max_{x\in\tilde \omega\setminus
\o_1,s\in[{-T},T]} \gamma(s)^3e^{2\sigma\alpha}
\leq e^{-(2-\delta_1)\sigma\mu_1}.
$$
Consequently,
\begin{equation}\label{eq25.1}
\begin{array}{ll}
\ds\q\int_{-T}^T\int_{\tilde\omega} e^{2\sigma\alpha}\gamma(t)^3|F_j(x,s)|^2  dxds\\
\ns\ds\leq
e^{-(2-\delta_2)\sigma\mu_2}\frac{\lambda^2T}{\pi
L} e^{\frac{\lambda^2}2 (T^2- (\frac L8)^2)}
\int_{\tilde\omega} \int_{K} |w_j(x,l)|^2 dldx
\end{array}
\end{equation}
and
\begin{equation}\label{eq25}
\begin{array}{ll}
\ds\q\int_{-T}^T\int_{\tilde\omega} e^{2\sigma\alpha}\gamma(t)^3|G_j(x,s)|^2  dxds\\
\ns\ds\leq
e^{-(2-\delta_1)\sigma\mu_1}\frac{\lambda^2T
L}{2\pi}e^{\frac{\lambda^2}2
T^2}\max\{|\nabla\chi|^2,
|\Delta\chi|^2\}\int_{-\frac L2}^{\frac L2}
\int_{\tilde\omega\backslash\omega_1} \left(
|u_j(x,l)|^2 + |\nabla u_j(x,l)|^2\right)dxdl.
\end{array}
\end{equation}

Substituting \eqref{eq23},  \eqref{eq25.1} and
\eqref{eq25}  into \eqref{78eq2}, we obtain that
\begin{eqnarray}\label{eq22.1}
&&\ds  \q\int_{-T}^T\int_{\tilde{\omega}}
\big[\sigma^{4}\gamma(s)^{4}|\nabla W_1|^2
+\sigma\gamma(s)|\nabla
W_2|^2+\sigma^{6}\gamma(s)^{6}|W_1|^2
+\sigma^{3}\gamma(s)^{3}|W_2|^2\big]e^{2\sigma\alpha}dxds\nonumber\\
&&\ds \leq
C_0\[e^{-(2-\delta_1)\sigma\mu_2}\frac{2\lambda^2T}{\pi
L} e^{\frac{\lambda^2}2 (T^2- (\frac L8)^2)}
\int_{\tilde\omega} \int_{K} (\si^3|w_1(x,l)|^2 + |w_2(x,l)|^2) dldx \\
&&\ds\q\q
+e^{-(2-\delta_1)\sigma\mu_1}\!\frac{\lambda^2T
L}{\pi}e^{\frac{\lambda^2}2
T^2}\!\max\{|\nabla\chi|^2, |\Delta\chi|^2\}
\!\!\int_{-\frac L2}^{\frac L2} \!\int_{\tilde\omega\backslash\omega_1}\!\!\!\big[\sigma^3 (|u_1(x,l)|^2 \!\!+\! |\nabla u_1(x,l)|^2)\nonumber\\
&&\ds\q\q\q\q\q\q\q\q+\big( |u_2(x,l)|^2 + |\nabla u_2(x,l)|^2\big)\big]dxdl\nonumber\\
&&\ds\q\q
+\sigma^7\frac{\lambda^2LT}{2\pi}e^{-(2-\delta_2)\sigma\mu_2}
e^{\frac{\lambda^2}2 T^2} \int_{-\frac
L2}^{\frac L2}
\int_{\omega}\left(|w_1(x,l)|^2+ |\nabla
w_1(x,l)|^2\right)dxdl\].\nonumber
\end{eqnarray}

In order to reduce the computation complexity of
the proof, and without loss of generality, in
the following steps, we assume $T=1$. Let $A>1$.
By choosing $L=8AT=8A$, we have
\begin{eqnarray}
&&\ds\3n\3n\q e^{-2\sigma\mu_3}\sigma^4\int_{-1+\epsilon}^{1-\epsilon}\int_{\omega_1} \left(|\nabla W_1|^2+|W_1|^2\right) dxds\nonumber\\
&&\ds\3n\3n\leq  \int_{-1}^1\int_{\tilde{\omega}} [\sigma^{4}\gamma(s)^{4}|\nabla W_1|^2+\sigma^{6}\gamma(s)^{6}|W_1|^2]e^{-2\sigma\alpha}dxds\nonumber\\
&&\ds\3n\3n\leq C_0
\[e^{-(2-\delta_1)\sigma\mu_2}\frac{2\lambda^2}{8\pi
A} e^{\frac{\lambda^2}2 (1- A^2)}
\int_{\tilde\omega} \int_{K} [\sigma^3|w_1(x,l)|^2 +|w_2(x,l)|^2 ]dldx \\
&&\ds\3n\3n\q\q +e^{-(2\!-\delta_1)\sigma\mu_1}\frac{8A\lambda^2}{\pi} e^{\frac{\lambda^2}2}\!\max\{|\nabla\chi|^2, |\Delta\chi|^2\}\!\int_{-4A}^{4A}\! \int_{\tilde\omega\backslash\omega_1}\!\![\sigma ^3 (|u_1(x,l)|^2\!\! + \!|\nabla u_1(x,l)|^2)\nonumber\\
&&\ds\3n\3n\q\q\q\q\q\q\q\q+( |u_2(x,l)|^2 + |\nabla u_2(x,l)|^2)]dxdl\nonumber\\
&&\ds\3n\3n\q\q
+\sigma^7\frac{4A\lambda^2}{\pi}e^{-(2-\delta_2)\sigma\mu_2}
e^{\frac{\lambda^2}2}\int_{-4A}^{4A}
\int_{\omega}\left(|w_1(x,l)|^2+ |\nabla
w_1(x,l)|^2\right)dxdl\],\nonumber
\end{eqnarray}
where
\begin{equation}
\mu_3=\frac{e^{2\eta\|\psi\|_\infty}-e^{2\eta\beta}}{\epsilon(2-\epsilon)}.
\end{equation}
Similarly, we can get that
\begin{eqnarray}
&&\ds\3n\3n\q e^{-2\sigma\mu_3}\sigma \int_{-1+\epsilon}^{1-\epsilon}\int_{\omega_1} \left(|\nabla W_2|^2+|W_2|^2\right) dxds\nonumber\\
&&\ds\3n\3n\leq  \int_{-1}^1\int_{\tilde{\omega}} [\sigma \gamma(s) |\nabla W_2|^2+\sigma^{3}\gamma(s)^{3}|W_2|^2]e^{-2\sigma\alpha}dxds\nonumber\\
&&\ds\3n\3n\leq C_0
\[e^{-(2-\delta_1)\sigma\mu_2}\frac{2\lambda^2}{8\pi
A} e^{\frac{\lambda^2}2 (1- A^2)}
\int_{\tilde\omega} \int_{K} [\sigma^3|w_1(x,l)|^2 +|w_2(x,l)|^2 ] dldx \\
&&\ds\3n\3n\q\q +e^{-(2\!-\delta_1)\sigma\mu_1}\frac{8A\lambda^2}{\pi} e^{\frac{\lambda^2}2}\!\max\{|\nabla\chi|^2, |\Delta\chi|^2\}\!\int_{-4A}^{4A}\! \int_{\tilde\omega\backslash\omega_1}\!\!\big[\sigma^3 \big(|u_1(x,l)|^2\!\! + \!|\nabla u_1(x,l)|^2\big)\nonumber\\
&&\ds\3n\3n\q\q\q\q\q\q\q\q+\big( |u_2(x,l)|^2 + |\nabla u_2(x,l)|^2\big)\big]dxdl\nonumber\\
&&\ds\3n\3n\q\q
+\sigma^7\frac{4A\lambda^2}{\pi}e^{-(2-\delta_2)\sigma\mu_2}
e^{\frac{\lambda^2}2}\int_{-4A}^{4A}
\int_{\omega}\left(|w_1(x,l)|^2+ |\nabla
w_1(x,l)|^2\right)dxdl\].\nonumber
\end{eqnarray}

Fix $\epsilon\in(0,1)$ such that
\begin{equation*}
\tau\triangleq\epsilon(2-\epsilon)\frac{2-\delta_1}2\frac{e^{2\eta\|\psi\|_\infty}-e^{\eta
\beta}}{e^{2\eta\|\psi\|_\infty}-e^{2\eta\beta}}-1>0.
\end{equation*}
This is equivalent to say that
\begin{equation*}
(2-\delta_1)\mu_1-2\mu_3=2\tau\mu_3>0.
\end{equation*}
Hence,
\begin{eqnarray}\label{8.7-eq8}
&&\ds\q
\int_{-1+\epsilon}^{1-\epsilon}\int_{\omega_1}
\left(|\nabla W_1|^2+|W_1|^2\right) dxdt
+ \frac{1}{\si^3}\int_{-1+\epsilon}^{1-\epsilon}\int_{\omega_1} \left(|\nabla W_2|^2+|W_2|^2\right) dxdt\nonumber\\
&&\ds\leq
C_0\frac{1}{\sigma^4}\[\sigma^3e^{-(2-\delta_1)\sigma\mu_2+2\sigma\mu_3}\frac{\lambda^2T}{4\pi
A} e^{\frac{\lambda^2}2 (1- A^2)}
\int_{\tilde\omega} \int_{K}[|w_1(x,l)|^2 +|w_2(x,l)|^2 ] dldx \nonumber\\
&&\ds\q\q +\sigma^3\frac{8A\lambda^2}{\pi} e^{-2\sigma\tau\mu_3}e^{\frac{\lambda^2}2}\max\{|\nabla\chi|^2, |\Delta\chi|^2\}
\big(\|u_1\|^2_{L^2(-4A,4A;H^1(\tilde\omega\backslash\omega_1))}+\nonumber\\
&&\ds \qq +\|u_2\|^2_{L^2(-4A,4A;H^1(\tilde\omega\backslash\omega_1))}\big)  +\sigma^7\frac{4\lambda^2A}{\pi}e^{\sigma(2\mu_3-(2-\delta_2)\mu_2)} e^{\frac{\lambda^2}2} \|w_1\|^2_{L^2(-4A,4A;H^1(\omega))}\]\nonumber\\
&&\ds\leq  C_0\[\frac{\lambda^2}{4\pi\sigma A}e^{-(2-\delta_1)\sigma\mu_2+2\sigma\mu_3}e^{\frac{\lambda^2}2 (1- A^2)} \big( \|w_1\|_{L^2(K\times\tilde\omega)}^2+ \|w_2\|_{L^2(K\times\tilde\omega)}^2\big) \nonumber\\
&&\ds\q\q +\frac{8\lambda^2 A}{\pi\sigma}e^{-2\sigma\tau\mu_3}e^{\frac{\lambda^2}2}\max\{|\nabla\chi|^2, |\Delta\chi|^2\}(\|u_1\|^2_{L^2(-4A,4A;H^1(\tilde\omega\backslash\omega_1))}\\
&&\ds\q\q
+\|u_2\|^2_{L^2(-4A,4A;H^1(\tilde\omega\backslash\omega_1))})
+\frac{4\lambda^2\sigma^3A}{\pi}e^{\sigma(2\mu_3-(2-\delta_2)\mu_2)}  e^{\frac{\lambda^2}2} \|w_1\|^2_{L^2(-4A,4A;H^1(\omega))}\]\nonumber\\
&&\ds\leq  C_0\frac{8\lambda^2 A}{\pi\sigma}\left[\frac1{32A^2}e^{-(2-\delta_1)\sigma\mu_2+2\sigma\mu_3}e^{\frac{\lambda^2}2 (1- A^2)} \big( \|w_1\|_{L^2(K\times\tilde\omega)}^2+ \|w_2\|_{L^2(K\times\tilde\omega)}^2\big)\right.\nonumber\\
&&\ds\q\q +e^{-2\sigma\tau\mu_3}e^{\frac{\lambda^2}2}\max\{|\nabla\chi|^2, |\Delta\chi|^2\}\big(\|u_1\|^2_{L^2(-4A,4A;H^1(\tilde\omega\backslash\omega_1))}\nonumber\\
&&\ds\qq +\|u_2\|^2_{L^2(-4A,4A;H^1(\tilde\omega\backslash\omega_1))}\big)\nonumber \\
&&\ds\q\q\left.+\frac{\sigma^4}2e^{\sigma(2\mu_3-(2-\delta_2)\mu_2)}  e^{\frac{\lambda^2}2 T^2} \|w_1\|^2_{L^2(-4A,4A;H^1(\omega))}\right]\nonumber\\
&&\ds\leq  C_0\!\frac{16\lambda^2 \!A}{\pi\sigma}\left[\left(e^{-(2-\delta_1)\sigma\mu_2+2\sigma\mu_3}\frac1{32A^2}e^{\frac{\lambda^2}2 (1- A^2)}\!+\!e^{-2\sigma\tau\mu_3}e^{\frac{\lambda^2}2}\!\max\{|\nabla\chi|^2, |\Delta\chi|^2\}\right){\bf C}\right.\nonumber\\
&&\ds\q\q\left.+\frac{\sigma^4}{2
}e^{\sigma(2\mu_3-(2-\delta_2)\mu_2)}
e^{\frac{\lambda^2}2}
\|w_1\|^2_{L^2(-4A,4A;H^1(\o))}\right].\nonumber
\end{eqnarray}

\noindent{\bfseries\itshape Step 4.} By Lemma
\ref{lm1},
\begin{equation}\label{eq27}
\begin{array}{ll}
\ds\q\sum_{j=1}^2\|a_{jj}-\tilde{a}_{jj}\|_{L^2(\Omega)}\\
\ns\ds\leq C\|(y_1(a_{11},a_{22})-\tilde{y}_1(\tilde{a}_{11},\tilde{a}_{22}),y_2(a_{11},a_{22})-\tilde{y}_2(\tilde{a}_{11},\tilde{a}_{22}))\|_{[H^1(0,T;H^1(\omega))]^2}\\
\ns\ds\leq  C\left(\|(\nabla w_1,\nabla
w_2)\|_{L^2(\omega_1\times(-T,T))}^2
+\|(w_1,w_2)\|_{L^2(\omega_1\times(-T,T))}^2\right)\\
\ns\ds\leq C\left(\|\big(\nabla (\Phi
w_1),\nabla (\Phi
w_2)\big)\|_{L^2(\omega_1\times(-\frac L2,\frac
L2))}^2+\|\big(\Phi w_1,\Phi
w_2\big)\|_{L^2(\omega_1\times(-\frac L2,\frac
L2))}^2\right).
\end{array}
\end{equation}
It follows from Parseval's identity that
\begin{equation}\label{eq27}
\begin{array}{ll}
\ds\q\|\Phi w_j\|_{L^2(\omega_1\times(-\frac L2,\frac L2))}^2=\int_{-\frac L2}^{\frac L2}\int_{\omega_1} |\Phi(t)w_j(x,t)|^2 dxdt\\
\ns\ds=\int_{\mathbb{R}}\int_{\omega_1} |\Phi(t)w_j(x,t)|^2 dxdt=\frac{1}{2\pi}\int_{\mathbb{R}}\int_{\omega_1} |\widehat{\Phi(l_0)w_j}(x,l_0)(t)|^2 dxdt\\
\ns\ds\leq\frac{1}{2\pi}\int_{\mathbb{R}}\int_{\omega_1}
|(1-F_\lambda)\widehat{\Phi(l_0)w_j}(x,l_0)(t)|^2
dxdt+\int_{\mathbb{R}}\int_{\omega_1}
|F_\lambda*\Phi(\cdot)w_j(x,\cdot)(l_0)|^2
dxdl_0.
\end{array}
\end{equation}
The first term in the right hand side of
\eqref{eq27} reads
\begin{eqnarray}\label{eq28}
&&\ds\frac{1}{2\pi}\int_{\mathbb{R}}\int_{\omega_1} |(1-F_\lambda)\widehat{\Phi(l_0)w_j}(x,l_0)(t)|^2 dxdt\nonumber\\
&&=\frac{1}{2\pi}\int_{\mathbb{R}}\int_{\omega_1} (1-e^{-(\frac t\lambda)^2})^2|\widehat{\Phi(l_0)w_j}(x,l_0)(t)|^2 dxdt\nonumber\\
&&\ds\leq\frac{1}{\pi\lambda^2}\int_{\mathbb{R}}\int_{\omega_1} |t\widehat{\Phi(l_0)w_j}(x,l_0)(t)|^2 dxdt\nonumber\\
&&\ds\leq\frac{2}{\lambda^2}\int_{\mathbb{R}}\int_{\omega_1} |\Phi'(l_0)w_j(x,l_0)+\Phi(l_0)\partial_{l_0} w_j(x,l_0)|^2 dxdl_0\\
&&\ds\leq\frac{4}{\lambda^2}\int_{\mathbb{R}}\int_{\omega_1}\left( |\Phi'(l_0)w_j(x,l_0)|^2+\Phi(l_0)\partial_{l_0} w_j(x,l_0)|^2\right) dxdl_0\nonumber\\
&&\ds\leq\frac{4}{\lambda^2}\[\(\frac2L\)^2\int_{K_0}\int_{\omega_1}|w_j(x,l_0)|^2dxdl_0+\int_0^L\int_{\omega_1} |\partial_{l_0}w_j(x,l_0)|^2 dxdl_0\]\nonumber\\
&&\ds\leq\frac{4}{\lambda^2}\[\(\frac1{4AT}\)^2\int_{K_0}\int_{\omega_1}|w_j(x,l_0)|^2dxdl_0+\int_0^{8AT}\int_{\omega_1}
|\partial_{l_0}w_j(x,l_0)|^2 dxdl_0\].\nonumber
\end{eqnarray}

Let
\begin{equation}\label{512eq1}
W_{j,\lambda}(x,l_0)\triangleq
W_j(x,0)=\int_{\mathbb{R}}
F_\lambda(l_0-l)\Phi(l)w_j(x,l)dl=F_\lambda*\Phi(\cdot)w_j(x,\cdot)(l_0).
\end{equation}
By applying the Cauchy integral formula, for
$\rho\in(0,T-\epsilon)$ and by setting
$z=\kappa+\rho e^{i\phi}$, we have that
\begin{equation}\label{512eq2}
\begin{array}{ll}
\ds W_{j,\lambda}(x,\kappa)=\frac1{2\pi i}\int_{|z-\kappa|=\rho}\frac{W_{j,\lambda}(x,z)}{z-\kappa}dz=\frac1{2\pi i}\int_0^{2\pi}W_{j,\lambda}(x,\kappa+\rho e^{i\phi})d\phi\\
\ns\ds=\frac1{2\pi i (T-\epsilon)}\int_0^{T-\epsilon}\int_0^{2\pi}W_{j,\lambda}(x,\kappa+\rho e^{i\phi})d\phi d\rho\\
\ns\ds=\frac1{2\pi i (T-\epsilon)}\int_{-T+\epsilon}^{T-\epsilon}\int_{-\sqrt{(T-\epsilon)^2-l_0^2}}^{\sqrt{(T-\epsilon)^2-l_0^2}}W_{j,\lambda}(x,l_0+is)|J(l_0,s)|dsdl_0\\
\ns\ds=\frac1{2\pi i
(T-\epsilon)}\int_{-T+\epsilon}^{T-\epsilon}\int_{-\sqrt{(T-\epsilon)^2-l_0^2}}^{\sqrt{(T-\epsilon)^2-l_0^2}}W_j(x,s)dsdl_0.
\end{array}
\end{equation}
Thus,
\begin{equation}\label{512eq2.2}
\begin{array}{ll}
\ds |W_{j,\lambda}(x,\kappa)|^2
=\frac{1}{\pi^2}\int_{-T+\epsilon}^{T-\epsilon}\int_{-T+\epsilon}^{T-\epsilon}\big|W_j(x,s)\big|^2dsdl_0.
\end{array}
\end{equation}
Integrating \eqref{512eq2.2} with respect to $x$
over $\omega_1$ and with respect to $\kappa$
over $[-\frac L2,\frac L2]$, we get that
\begin{equation}\label{512eq2}
\begin{array}{ll}
\ds\int_{-4AT}^{4AT}\int_{\omega_1} |W_{j,\lambda}(x,\kappa)|^2dxd\kappa\\
\ns\ds\leq\frac{1}{\pi (1-\epsilon)^2} \int_{-4AT}^{4AT}\int_{-T+\epsilon}^{T-\epsilon}\left(\int_{-T+\epsilon}^{T-\epsilon}\int_{\omega_1}\big|W_j(x,s)\big|^2dxds\right)dl_0d\kappa\\
\ns\ds\leq\frac{16AT(T-\epsilon)}{\pi^2}
\int_{-T+\epsilon}^{T-\epsilon}\int_{\omega_1}\big|W_j(x,s)\big|^2dxds.
\end{array}
\end{equation}
Substituting \eqref{eq28}, \eqref{512eq2} into \eqref{eq27} and noting that $T=1$, we find that
\begin{equation}\label{514eq1}
\begin{array}{ll}
\ds\q\|\Phi w_j\|_{L^2(\omega_1\times(-4A,4A))}^2\\
\ns\ds\leq\frac{4}{\lambda^2}\[\frac1{16A^2}\int_{K_0}\int_{\omega_1}|w_j(x,l_0)|^2dxdl_0+\int_{-4A}^{4A}\int_{\omega_1} |\partial_{l_0}w_j(x,l_0)|^2 dxdl_0\]\\
\ns\ds\q\q+\frac{16A}{\pi^2}
\int_{-1+\epsilon}^{1-\epsilon}\int_{\omega_1}\big|W_j(x,s)\big|^2dxds.
\end{array}
\end{equation}
Similarly, we can obtain that
\begin{equation}\label{514eq3}
\begin{array}{ll}
\ds\q\|\nabla(\Phi w_j)\|_{L^2(\omega_1\times(-4A,4A))}^2\\
\ns\ds\leq\frac{4}{\lambda^2}\[\frac1{16A^2}\int_{K_0}\int_{\omega_1}|\nabla w_j(x,l_0)|^2dxdl_0+\int_{-4A}^{4A}\int_{\omega_1} |\partial_{l_0}\nabla w_j(x,l_0)|^2 dxdl_0\]\\
\ns\ds\q\q+\frac{16A}{\pi^2}
\int_{-1+\epsilon}^{1-\epsilon}\int_{\omega_1}\big|\nabla
W_j(x,s)\big|^2dxds.
\end{array}
\end{equation}
Let $\ds\sigma=\frac{\lambda^2}{2\tau\mu_3}$ and
$C_2=\max\{|\nabla\chi|^2, |\Delta\chi|^2\}$ such that
$$
e^{\sigma(2\mu_3-(2-\delta_2)\mu_2)}e^{\frac{1-A^2}2}\leq
1.
$$
From \eqref{8.7-eq8}, \eqref{eq27},
\eqref{514eq1} and \eqref{514eq3}, we have
\begin{eqnarray}\label{514eq2}
&&\ds\q\sum_{j=1}^2\|a_{jj}-\tilde{a}_{jj}\|^2_{L^2(\Omega)} \nonumber\\
&&\ds\leq C\bigg\{\frac{4}{\lambda^2}\[ \frac1{16A^2} \int_{K_0}\int_{\omega_1}\sum_{j=1}^2\big(|w_j(x,l_0)|^2+|\nabla w_j(x,l_0)|^2\big)dxdl_0  \nonumber\\
&&\ds\q\q  +\int_{-4A}^{4A}\int_{\omega_1}\sum_{j=1}^2 (|\partial_{l_0}w_j(x,l_0)|^2 + |\partial_{l_0}\nabla w_j(x,l_0)|^2 )dxdl_0\]\nonumber\\
&&\ds\q\q +\frac{1}{\pi^2}\(1 + \frac{1}{1 - \epsilon}\)^2 C_0 \frac{256A^2\lambda^2}{\pi \sigma}\[ \(\frac1{32A^2}e^{\sigma(2\mu_3-(2-\delta_2)\mu_2)}e^{\frac{1-A^2}2 \lambda^2}\\
&&\ds \qq + e^{-2\sigma\tau\mu_3}e^{\frac{\lambda^2}2}\max\{|\nabla\chi|^2, |\Delta\chi|^2\}\){\bf C}\nonumber\\
&&\ds\q\q
+\frac{\sigma^4}{2}e^{\sigma(2\mu_3-(2-\delta_2)\mu_2)}
e^{\frac{\lambda^2}2}
\|w_1\|^2_{L^2(-4A,4A;H^1(\omega))}\]
\bigg\}\nonumber\\
&&\ds\leq C\bigg\{\[\frac{8}{\lambda^2}+ C_0\frac{256A^2\lambda^2}{\pi^3 \sigma}e^{-2\sigma\tau\mu_3}\(\frac1{32A^2}e^{-\frac{A^2-1}2 \lambda^2} +C_2e^{\frac{\lambda^2}2}\)\]{\bf C} \nonumber\\
&&\ds\q\q +C_0 \frac{128A^2\lambda^2\sigma^3}{\pi^3 }e^{\sigma(2\mu_3-(2-\delta_2)\mu_2)}  e^{\frac{\lambda^2}2} \|w_1\|^2_{L^2(-4A,4A;H^1(\omega))}\bigg\}\nonumber\\
&&\ds\leq C\bigg\{\[\frac{8}{\lambda^2}+ C_0\frac{512A^2\tau\mu_3}{\pi^3}\(\frac1{32A^2}e^{-\frac{A^2+1}2 \lambda^2} +C_2e^{-\frac{\lambda^2}2}\)\]   {\bf C} \nonumber\\
&&\ds\q\q
+C_0
\frac{16\tau^3\mu_3^3A^2}{\pi^3
\lambda^4}e^{(\frac{2\mu_3-(2-\delta_2)\mu_2}{2\tau\mu_3}+\frac{1}2)\lambda^2}
\|w_1\|^2_{L^2(-4A,4A;H^1(\omega))}\bigg\}.\nonumber
\end{eqnarray}
Let $\lambda\geq\lambda_0$ be such that
\begin{equation}\label{519eq1}
\sum_{j=1}^2\|a_{j}-\tilde{a}_{j}\|^2_{L^2(\Omega)}
\leq \frac{C_3}{\lambda^2}{\bf
C}+e^{C_4{\lambda^2}}
\|w_1\|^2_{L^2(-T,T;H^1(\omega))},
\end{equation}
where $C_3$ and $C_4$ are two constants
independent of $\lambda$. Taking
$$
\lambda=\max\Big\{\lambda_0,\(\frac { |\ln
\|w_1\|_{L^2(-T,T;H^1(\omega))}|}
{C_4}\)^\frac12\Big\}.
$$
If $\|w_1\|_{L^2(0,T;H^1(\omega))}$ is small
enough, then
\begin{equation}\label{519eq2}
\begin{array}{ll}
\ds\sum_{j=1}^2\|a_{jj}-\tilde{a}_{jj}\|^2_{L^2(\Omega)}
\3n&\ds\leq \frac{C_3C_4}{ |\ln \|w_1\|_{L^2(-T,T;H^1(\omega))}|}{\bf C}+\|w_1\|_{L^2(-T,T;H^1(\omega))}\\
\ns&\ds\leq C\left( |\ln
\|w_1\|_{L^2(-T,T;H^1(\omega))}|^{-1}{\bf
C}+\|w_1\|_{L^2(-T,T;H^1(\omega))}\right)\\
\ns&\ds\leq C\left( |\ln
\|w_1\|_{L^2(0,T;H^1(\omega))}|^{-1}{\bf
C}+\|w_1\|_{L^2(0,T;H^1(\omega))}\right).
\end{array}
\end{equation}
Otherwise, there exists a constant $m>0$ such
that $\|w_1\|_{L^2(-T,T;H^1(\omega))}\geq m$.
Thus, by \eqref{57eq10} we have
\begin{equation}\label{519eq3}
\sum_{j=1}^2\|a_{jj}-\tilde{a}_{jj}\|^2_{L^2(\Omega)}
\leq {\bf C}=\frac {\bf C}mm\leq
C\|w_1\|_{L^2(-T,T;H^1(\omega))}\leq
C\|w_1\|_{L^2(0,T;H^1(\omega))}.
\end{equation}
\end{proof}
\section*{acknowledgement}
The first author thanks the support of the National Natural Science Foundation of China (No. 11501086), the Fundamental Research Funds for the Central Universities (No. ZYGX2016J137) and the Science Strength Promotion Programme of UESTC. The second author is supported by Grant-in-Aid for Scientific Research (S) 15H05740 and A3 Foresight Program Modeling and Computation of Applied Inverse Problems” of Japan Society for the Promotion of Science, and the ”RUDN University Program 5-100”.

\end{document}